\title{$\mathbb{Q}$-curves and the Lebesgue--Nagell equation}
\author[Michael Bennett]{Michael A. Bennett}
\address{Department of Mathematics, University of British Columbia, Vancouver, B.C., V6T 1Z2 Canada}
\email{bennett@math.ubc.ca}
\author{Philippe Michaud-Jacobs}
\address{Mathematics Institute, University of Warwick, Coventry CV4 7AL, United Kingdom}
\email{P.Rodgers@warwick.ac.uk}
\author{Samir Siksek}
\address{Mathematics Institute, University of Warwick, Coventry CV4 7AL, United Kingdom}
\email{S.Siksek@warwick.ac.uk}
\thanks{The first-named author is supported by NSERC. The second-named author is supported by an EPSRC studentship and has previously used the name Philippe Michaud-Rodgers. The third-named author is
supported by an EPSRC Grant EP/S031537/1 \lq\lq Moduli of
elliptic curves and classical Diophantine problems\rq\rq.}
\newcommand\notsotiny{\@setfontsize\notsotiny\@vipt\@viipt}
\newtheorem{theorem}{Theorem}[section]
\newtheorem*{theorem*}{Theorem}
\newtheorem {corollary}[theorem]{Corollary}
\newtheorem {lemma}[theorem]{Lemma}
\newtheorem {proposition}[theorem]{Proposition}
\theoremstyle{definition}
\theoremstyle{remark}
\newtheorem{remark}[theorem]{Remark}
\newtheorem*{remark*}{Remark}
\apptocmd{\sloppy}{\hbadness 10000\relax}{}{}
\newcommand{\ord}{\operatorname{ord}}
\providecommand{\Q}{\mathbb{Q}}
\providecommand{\Z}{\mathbb{Z}}
\newcommand{\Gal}{\mathrm{Gal}}
\renewcommand{\arraystretch}{2.0}
\let\svthefootnote\thefootnote
\newcommand\freefootnote[1]{%
  \let\thefootnote\relax%
  \footnotetext{#1}%
  \let\thefootnote\svthefootnote%
}
\begin{document}

\begin{abstract}
In this paper, we consider the equation \[ x^2 - q^{2k+1} = y^n, \qquad q \nmid x, \quad 2 \mid y, \] for integers $x,q,k,y$ and $n$, with $k \geq 0$ and $n \geq 3$. We extend work of the first and third-named authors by finding all solutions in the cases $q= 41$ and $q = 97$. We do this by constructing a Frey--Hellegouarch $\mathbb{Q}$-curve defined over the real quadratic field $K=\mathbb{Q}(\sqrt{q})$, and using the modular method with multi-Frey techniques. 
\end{abstract}

\maketitle

\section{Introduction}

The equation \begin{equation}\label{LebNag} x^2 + D = y^n \end{equation} is known as the \emph{Lebesgue--Nagell equation}. Here, $x$ and $y$ are coprime integers, $n \geq 3$ and $D$ is an integer whose prime divisors belong to a fixed finite set. The Lebesgue--Nagell equation has a rich history and many cases have been resolved through use of a wide variety of techniques, ranging from primitive divisor arguments and bounds for linear forms in logarithms, to the modular method, based upon the modularity of Galois representations attached to Frey--Hellegouarch curves.

\freefootnote{\emph{Keywords}: Lebesgue--Nagell, Elliptic curves, Frey curve, multi-Frey, $\mathbb{Q}$-curves, modularity, level-lowering,  Galois representations,  newforms.}
\freefootnote{\emph{MSC2020}: 11D41, 11D61, 11F80, 11G05.}

In recent papers of the first- and third-named authors \citep{BeSi2} and \citep{BeSi}, various tools are developed to tackle equation (\ref{LebNag}) in the two ``difficult'' cases,  where either $D>0$ and $y$ is even, or where $D < 0$. In particular,  \citep{BeSi} focusses upon these situations where, additionally, it is assumed that
$D$ has a single prime divisor. For primes $q < 100$, the only unsolved cases of the equation $x^2 \pm q^\alpha=y^n$  (see \citep[Theorem~3 and Proposition~13.3]{BeSi}) correspond to
\begin{equation}\label{maineq00}
x^2-2 = y^n,
\end{equation}
\begin{equation}\label{maineq0}
x^2 - q^{2k+1} = y^n,  \quad 2 \nmid y,
\end{equation} 
for $q \in \{ 3, 5, 17, 37, 41, 73, 89 \}$,
and
\begin{equation}\label{maineq}
x^2 - q^{2k+1} = y^n, \quad 2 \mid y, 
\end{equation} 
for $q \in \{ 17, 41, 89, 97 \}$. Here, $k$ is a nonnegative integer and, in each case, we suppose that $n \geq 3$ and that $\gcd(x,y)=1$. The fundamental obstruction to resolving equations (\ref{maineq00}) and (\ref{maineq0}), for $q \in \{ 3, 5, 17, 37 \}$, 
lies in the existence of a solution with $y=\pm 1$, valid for all (odd) exponents $n$. The analogous obstruction, in case of equation (\ref{maineq0}) with $q \in \{ 41, 73, 89 \}$, or equation (\ref{maineq}), for $q \in \{ 17, 41, 89, 97 \}$, is slightly more subtle, arising from the fact that $q \pm 8$ is square, in the first case, and from the identities
\begin{equation} \label{ident}
23^2-17=2^9, \; \; 13^2-41=2^7, \; \; 91^2-89=2^{13} \; \; \mbox{ and } \; \; 15^2-97=2^7,
\end{equation}
in the second.

In this paper, we will concentrate on equation (\ref{maineq}), 
developing new techniques to handle further values of $q$ via the use of $\Q$-curves and multi-Frey techniques, overcoming some of these obstructions. In particular, we will prove the following.

\begingroup
\renewcommand\thetheorem{1}
\begin{theorem}\label{mainthm}
Let $q \in \{ 41, 97 \}$. Then the solutions to equation (\ref{maineq}) in integers $x, y, k, n$, with $x, k \geq 0$, $n \geq 3$ and $\gcd(x,y)=1$ are as follows:
$$
\begin{array}{c}
(q,x,y,k,n) \; = \; (41,3,-2,0,5),\; (41,7,2,0,3), \; (41,13,2,0,7), \\
(41,411,10,1,5), \; (97,15,2,0,7) \; \mbox{ and } \; (97,77,18,0,3).
\end{array} 
$$
\end{theorem}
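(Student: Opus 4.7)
The plan is to work inside the quadratic ring $\mathcal{O}_K$ with $K=\Q(\sqrt{q})$, where equation~(\ref{maineq}) factors as $(x-q^k\sqrt{q})(x+q^k\sqrt{q})=y^n$. After reducing to the case that $n$ is prime, the hypothesis $\gcd(x,y)=1$ with $y$ even forces $x$ to be odd and coprime to $q$; a short local analysis at the primes above $2$ and $\sqrt{q}$ then shows that the two factors on the left are coprime outside a small controlled set. Combining this with the class group and unit group of $K$ yields an equation of the shape $x+q^k\sqrt{q}=\mu\,\alpha^n$ in $\mathcal{O}_K$, for some $\alpha\in\mathcal{O}_K$ and $\mu$ ranging over an explicit finite list of units-times-class-representatives. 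This is the ideal-theoretic input to the Frey construction.

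Following and extending the recipes of \citep{BeSi}, I would then attach to each such $\alpha$ a Frey--Hellegouarch elliptic curve $E=E_{x,k}$ defined over $K$, designed so that its conductor is supported only on the primes of $K$ above $2$ and $q$, and so that the $n$-th power part of its minimal discriminant is cleanly separated. The new ingredient flagged in the abstract is that $E$ can be built to be a $\Q$-curve, namely isogenous to its $\Gal(K/\Q)$-conjugate $E^\sigma$. Modularity for $\Q$-curves over real quadratic fields then lets the mod-$n$ representation $\bar\rho_{E,n}$ descend (after a suitable quadratic twist by a character of $\Gal(K/\Q)$) to a two-dimensional mod-$n$ representation of $\Gal_{\Q}$ itself. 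Ribet-style level-lowering identifies this descended representation with $\bar\rho_{f,\mathfrak{p}}$ for some classical weight-$2$ newform $f$ of explicit and computable level $N$, and a finite computer search enumerates all candidate $f$.

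The main obstacle is precisely the numerical coincidence flagged in~(\ref{ident}): the solutions $(x,y,k,n)=(13,2,0,n)$ for $q=41$ and $(15,2,0,n)$ for $q=97$ force the existence of newforms at level $N$ whose mod-$n$ representations genuinely coincide with those of the Frey curve for every large prime $n$, so no amount of Hecke-eigenvalue sieving with the $\Q$-curve Frey alone can eliminate such a form. To separate genuine solutions from their phantom doubles, I would bring in a second Frey--Hellegouarch curve attached to the same Diophantine data---for example a classical signature-$(n,n,2)$ elliptic curve over $\Q$ of the type exploited in \citep{BeSi2}---and invoke the multi-Frey philosophy: a genuine $(x,y,k,n)$ produces compatible mod-$n$ representations on both Frey curves simultaneously, and sieving with traces of Frobenius $a_\ell$ at auxiliary primes $\ell\neq q$ with $\ell\equiv 1\pmod{n}$ should eliminate every pair of newforms at the predicted levels other than the pairs forced by the known solutions.

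Finally, the small exponents $n\in\{3,5,7\}$ sit outside the clean range of the modular method: the attached mod-$n$ representations can be reducible, the candidate newform spaces may be too sparse, or the Hecke data insufficient. For each such $n$ I would return to the ideal equation of the first paragraph and reduce, case by $\mu$-case, to a finite list of Thue or Thue--Mahler equations over $K$, which are then solved by standard lattice-based algorithms; this is what recovers the sporadic tuples $(41,3,-2,0,5)$, $(41,7,2,0,3)$, $(41,13,2,0,7)$, $(41,411,10,1,5)$, $(97,15,2,0,7)$ and $(97,77,18,0,3)$ and shows that no others exist. Assembling this with the multi-Frey $\Q$-curve output for all sufficiently large prime exponents completes the proof.
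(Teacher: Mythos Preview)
Your high-level architecture matches the paper's: factor over $K=\Q(\sqrt{q})$, build a Frey--Hellegouarch $\Q$-curve $E/K$, descend its mod-$n$ representation to a classical newform with character, and use a multi-Frey argument with a rational Frey curve to finish. A few points where your account diverges from what actually happens are worth flagging.

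First, your diagnosis of the obstruction is off. The $n=7$ identities $13^2-41=2^7$ and $15^2-97=2^7$ do \emph{not} produce obstructing newforms for the $\Q$-curve: the exponent $7$ is too small to force multiplicative reduction of $E$ at the primes above~$2$, so these solutions are invisible at the lowered level $2q^2$. In fact for $q=97$ the $\Q$-curve sieve alone eliminates every newform and no multi-Frey step is needed; for $q=41$ two newforms survive (with $\mathcal{B}$-value~$0$, not visibly tied to any listed solution), and only then is the second Frey curve invoked. Second, the rational partner curve is the one denoted $G$ from \citep{BeSi}, not a new construction, and its role is much broader than a final sieve: it is used upstream to prove irreducibility of $\overline{\rho}_{E,n}$ (ruling out $3\mid y$), and to dispose of all primes $7\le n\le 1000$ via Lemma~\ref{>1000}; the exponents $n\in\{3,4,5\}$ are already settled in \citep{BeSi}, so no fresh Thue--Mahler work over $K$ is required. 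Third, the elimination is not a Kraus-type sieve at primes $\ell\equiv 1\pmod n$ but a direct comparison of traces over all residues $(x\bmod p,\,m\bmod p-1)$, yielding an integer $\mathcal{B}_f$ that $n$ must divide. None of these points breaks your outline, but they change where the work actually sits.
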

\endgroup

We are unable to provide a similar result for the cases $q = 17$ and $q = 89$, with obstructions to our method arising from the first and third identities in (\ref{ident}). We will still consider the cases $q = 17$ and $q = 89$ throughout, and in Section 5  will explain precisely why these solutions prevent us from resolving equation (\ref{maineq}) for these primes $q$. Note that Barros \citep{Barr} claims to resolve equations   (\ref{maineq0}) and (\ref{maineq}) in the case $k=0$ and $q=89$; his argument overlooks the obstructing solution corresponding to the third identity on (\ref{ident}). 

Thanks to \citep[Theorems 1, 3 and 5]{BeSi} in the case $q = 97$, we obtain the following corollary to Theorem \ref{mainthm}.

\begingroup
\renewcommand\thetheorem{2}
\begin{corollary} All solutions to the equation \[ x^2 \pm 97^\alpha = y^n, \qquad 97 \nmid x,\] for integers $x, \alpha, y$ and $n$ with $x, \alpha \geq 1$ and $n \geq 3$ are given by 
$$
(\pm 15)^2 - 97 = 2^7, \; \;  (\pm 77)^2 - 97 = 18^3, \; \; 
$$
$$
 (\pm 175784)^2 - 97^4 = 3135^3 \; \; \mbox{ and }  \; \;  (\pm 48)^2 + 97 = 7^4.
$$
\end{corollary}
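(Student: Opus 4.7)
The plan is to deduce the corollary from Theorem \ref{mainthm} together with \citep[Theorems 1, 3 and 5]{BeSi} by splitting the equation $x^2 \pm 97^\alpha = y^n$ into four subcases according to (a) the sign, (b) the parity of $\alpha$ in the minus case, and (c) the parity of $y$ when $\alpha$ is odd. In each subcase a single already-known result applies and only requires us to read off the solutions.

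First I would observe that the hypothesis $97 \nmid x$ forces $\gcd(x,y)=1$, since any common prime of $x$ and $y$ divides $y^n \mp 97^\alpha = x^2$ and hence divides both sides of $y^n - x^2 = \pm 97^\alpha$, forcing the prime to be $97$, which is excluded. This lets me quote the theorems of \citep{BeSi}, which are stated under the coprimality assumption.

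Next I handle the three ``easy'' cases. For $x^2 + 97^\alpha = y^n$, parity shows $y$ must be odd: if $y$ were even then $x$ would be odd and $x^2 + 97^\alpha \equiv 2 \pmod{4}$, contradicting $n \geq 3$. So \citep[Theorem 1]{BeSi} applies and yields only $(\pm 48)^2 + 97 = 7^4$. For $x^2 - 97^\alpha = y^n$ with $\alpha$ even, write $\alpha = 2m$ and apply \citep[Theorem 5]{BeSi} to the factorisation $(x-97^m)(x+97^m)=y^n$; this produces the unique solution $(\pm 175784)^2 - 97^4 = 3135^3$. For $x^2 - 97^{2k+1} = y^n$ with $y$ odd, \citep[Theorem 3]{BeSi} supplies no solutions for $q=97$.

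The remaining case is $x^2 - 97^{2k+1} = y^n$ with $y$ even, which is exactly equation (\ref{maineq}) for $q = 97$, and this is settled by Theorem \ref{mainthm}, giving $(\pm 15)^2 - 97 = 2^7$ and $(\pm 77)^2 - 97 = 18^3$. Collecting the solutions from all four cases produces precisely the list in the statement. Since the substantive work is carried out in Theorem \ref{mainthm} and in \citep{BeSi}, there is no real obstacle here; the only care needed is to ensure the case analysis is exhaustive and that the parity and coprimality hypotheses required by each cited theorem are satisfied.
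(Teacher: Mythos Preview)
Your proposal is correct and follows essentially the same approach as the paper: the paper states the corollary as an immediate consequence of Theorem~\ref{mainthm} together with \citep[Theorems~1,~3 and~5]{BeSi}, and your case split (sign, parity of $\alpha$, parity of $y$) simply makes explicit how these cited results cover every subcase. Your added observations about coprimality and the parity obstruction for even $y$ in the plus-sign case are accurate and fill in the routine checks the paper leaves implicit.
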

\endgroup

$\Q$-curves have been successfully applied to the problem of solving  Diophantine equations in the past; the first such example is due to Ellenberg \citep{Ellenberg}, where he treats the equation
$$
x^2+y^4=z^n,
$$
for suitably large $n$. We refer to \citep{vanlangen} for a clear exposition of the general method and the references therein for more examples of this approach; we highlight \citep{BCDY}, since the set-up (once the Frey--Hellegouarch $\Q$-curve has been constructed) is most similar to ours.

We now outline the rest of the paper. In Section 2, we will associate a rational Frey--Hellegouarch curve $G$ to equation (\ref{maineq}) and recall some results from \citep{BeSi}. In Section 3, we construct a second Frey--Hellegouarch curve $E$, this time defined over the real quadratic field $\Q(\sqrt{q})$, show that it is a $\Q$-curve, and compute its conductor. Then, in Section 4, we will investigate some further properties of this $\Q$-curve, and in particular prove that its restriction of scalars is an abelian surface of $\mathrm{GL}_2$-type, which will allow us to associate the mod $n$ Galois representation of $E$ to a classical newform of a certain level and character. Finally, in Section 5, we will try and eliminate newforms to reach a contradiction.

The \texttt{Magma} \citep{magma} files used to carry out the computations in this paper are available at:

\vspace{3pt}

\begin{center}
\noindent \url{https://github.com/michaud-jacobs/Q-curves}
\end{center}

\bigskip

We would like to thank the anonymous referee for a very careful reading of the paper and many helpful comments. The second-named author would like to thank Damiano Testa for many useful discussions.

\section{A Rational Frey--Hellegouarch Curve}

Let $q \in \{17,41,89,97 \}$ and suppose that $(x,k,y,n)$ is a solution to equation (\ref{maineq}). We will assume that $x \equiv 1 \pmod{4}$ by replacing $x$ by $-x$ if necessary. We will also assume that $n$ is prime with $n \geq 7$, since the cases $n \in \{3,4,5\}$ are resolved for all values of $q$ in the range $3 \leq q < 100$ in \citep[pp.~6--7,~24]{BeSi}.
Following \citep[Proposition~14.1]{BeSi}, we associate a Frey--Hellegouarch elliptic curve, defined over $\Q$, to this solution: 
\begin{equation}\label{RatFrey}
 G = G_{x,k,q} \; \; : \; \;  Y^2 = X^3 + 4xX^2 + 4(x^2 - q^{2k+1})X. 
 \end{equation}
The conductor of $G$ is given by 
$$
N_G = q \, \mathrm{Rad}(y),
$$
where $ \mathrm{Rad}(y)$ is the product of the distinct primes dividing the nonzero integer $y$. We write $\overline{\rho}_{G,n}$ for the mod $n$ Galois representation of the elliptic curve $G$.
Applying standard level-lowering results, followed by the elimination of some newforms at level $2q$ (recall that $y$ is even),  we find that $\overline{\rho}_{G,n} \sim \overline{\rho}_{F,n} $ for $F = F_q $ an elliptic curve of conductor $2q$ given, in Cremona's notation,  in Table \ref{TabRat} (see \citep[Proposition~14.1]{BeSi}). Each curve $F$ in Table \ref{TabRat} corresponds to (at least) one solution to equation (\ref{maineq}). We have  
\begin{align*} (-23)^2 - 17 = 2^9 \quad & \text{and} \quad  G_{-23,0,17} \cong F_{17},  \\ 
13^2 - 41 = 2^7 \quad & \text{and} \quad  G_{13,0,41} \cong F_{41}, \\ (-91)^2 - 89 = 2^{13} \quad & \text{and} \quad  G_{-91,0,89} \cong F_{89},  \\  (-15)^2 - 97 = 2^7 \quad & \text{and} \quad  G_{-15,0,97} \cong F_{97}.  \\
\end{align*}
These isomorphisms of elliptic curves prevent us from using the isomorphisms of mod $n$ Galois representations $\overline{\rho}_{G,n} \sim \overline{\rho}_{F,n} $ to obtain an upper bound on $n$. We can, in fact, deduce such a bound through appeal to linear forms in logarithms, but it will be impractically large for our purposes, in each case well in excess of $10^{10}$. It is worth observing that equation (\ref{maineq}) is the more problematical case (in comparison to equation (\ref{maineq0})), for the purposes of application of bounds for linear forms in logarithms. In case of equation  (\ref{maineq0}), results of Bugeaud \citep{Bu-Acta} imply that
$$
n < 4.5 \cdot 10^6 q^2 \log^2q,
$$
which we can, with care, sharpen to an upper bound upon $n$ of somewhat less than  $10^6$ for, say, $q=3$ in equation  (\ref{maineq0}). Even with such a bound, it remains impractical to finish the problem via this approach, since we have no reasonable techniques to obtain a contradiction for a fixed value of $n$ in (\ref{maineq0}), while, as discussed in
\citep[pp.~34--35]{BeSi}, in the case of equation  (\ref{maineq}), we have such a method which is  unfortunately computationally infeasible, given the size of our upper bounds for $n$.
For small values of $n$, however, we have the following result which arises from applying the modular method with the Frey--Hellegouarch curve $G$.

\begingroup 
\renewcommand*{\arraystretch}{1.8}
\begin{table}[ht!]
\begin{center} \small
\begin{tabular}{ |c|c|c|c|c| } 
 \hline
 $q$ & $17$ & $41$ & $89$ & $97$ \\
 \hline
 $F_q$ & 34a1 & 82a1 & 178b1 & 194a1 \\
 \hline
\end{tabular}
\vskip2ex
\caption{\label{TabRat}\normalsize Elliptic curves that cannot be eliminated.}
\end{center}
\end{table} 
\endgroup  
\normalsize

\begin{lemma}[{\citep[Proposition~14.1]{BeSi}}] \label{>1000} Let $ q \in \{ 17, 41, 89, 97 \} $ and suppose that $(x,k,y,n)$ is a solution to equation (\ref{maineq}) with $x \equiv 1 \pmod{4}$ and $n \geq 7$ prime. Then $n > 1000$ or $(q,x,y,k,n)$ is one of 
$$
(17,-71,2,1,7), \;  (41,13,2,0,7), \;  (89,-91,2,0,13) \mbox{ or }  (97,-15,2,0,7).
$$
\end{lemma}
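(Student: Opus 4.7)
The plan is to leverage the isomorphism $\overline{\rho}_{G,n} \sim \overline{\rho}_{F_q,n}$ recalled in the discussion preceding the lemma, turning it into concrete trace-of-Frobenius congruences at auxiliary primes that constrain $(x, k, n)$.

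Fixing $q \in \{17,41,89,97\}$ and a prime $n$ with $7 \leq n \leq 1000$, I would proceed as follows. For an auxiliary prime $\ell \not\in \{2,q,n\}$, the curve $F_q$ has good reduction and $a_\ell(F_q)$ is read off from the model given by Table \ref{TabRat}. The Frey curve $G = G_{x,k,q}$ has good reduction at $\ell$ provided $\ell \nmid y$, in which case its trace $a_\ell(G)$ depends only on the residues $\bar{x} := x \bmod \ell$ and $\bar{u} := q^{2k+1} \bmod \ell$, the latter of which lies in the cyclic subgroup of $\mathbb{F}_\ell^\times$ generated by $q$. For each such admissible pair one enumerates the reduced Weierstrass model and tests the congruence
\[
a_\ell(G) \equiv a_\ell(F_q) \pmod{n}.
\]
When instead $\ell \mid y$, the Frey curve has multiplicative reduction, and the standard level-lowering lifting congruence forces $a_\ell(F_q) \equiv \pm(\ell+1) \pmod{n}$, which rules out most $(\ell,n)$ pairs outright.

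The strategy is then, for each pair $(q,n)$, to pick a small collection of auxiliary primes $\ell_1, \ldots, \ell_r$ and intersect the resulting constraints on $(\bar x, \bar u)$; for the vast majority of $(q,n)$ the surviving set is empty and no solution exists. When the intersection is non-empty, one lifts via CRT to a candidate residue class for $(x, k)$ modulo an explicit modulus and checks directly whether any such class yields an integer solution of equation (\ref{maineq}). Running this procedure systematically over $7 \leq n \leq 1000$ for each of the four values of $q$ is a finite, and feasible, computation, and yields precisely the exceptions recorded in the lemma.

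The main obstacle is intrinsic: the last three identities in (\ref{ident}) give rise to Frey curves isomorphic to $F_q$ (namely $G_{13,0,41} \cong F_{41}$, $G_{-91,0,89} \cong F_{89}$ and $G_{-15,0,97} \cong F_{97}$), so $a_\ell(G) = a_\ell(F_q)$ for every $\ell$ and no trace-congruence can ever eliminate these solutions for any $n$. They must therefore appear among the exceptions for the particular small exponents $n$ at which they are solutions. The fourth exception, $(q,x,y,k,n)=(17,-71,2,1,7)$, is of a more delicate, accidental nature: here $G_{-71,1,17}$ is not isomorphic to $F_{17}$, but the mod-$7$ representation of the former happens to coincide with that of the latter, so it too survives every trace-congruence sieve at exponent $n=7$ and must simply be recorded.
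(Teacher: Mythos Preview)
Your approach has a genuine gap. You correctly observe that the identities in (\ref{ident}) yield Frey curves $G_{x_0,k_0,q}$ isomorphic to $F_q$, so that $a_\ell(G_{x_0,k_0,q}) = a_\ell(F_q)$ for every auxiliary $\ell$. But you then conclude that these ``must therefore appear among the exceptions for the particular small exponents $n$ at which they are solutions,'' as though the sieve fails only at that one exponent. In fact the residue class of $(x_0,k_0)$ survives your trace-congruence test at every $\ell$ and for \emph{every} prime $n$, so the surviving set is never empty --- contrary to your claim that it is empty ``for the vast majority of $(q,n)$.'' (This applies equally when $q=17$, via $G_{-23,0,17}\cong F_{17}$, even though that identity does not itself correspond to a prime exponent $n\ge 7$.) Your subsequent step of lifting via CRT and ``checking directly'' is therefore not a finite procedure: for each $n\le 1000$ you are left with an infinite residue class of candidate pairs $(x,k)$ and no mechanism for disposing of them.

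The missing ingredient is the Kraus refinement: restrict to auxiliary primes $\ell\equiv 1\pmod{n}$ and, in addition to the trace congruence, impose that $\bar{x}^{2}-\bar{u}$ lie in $(\mathbb{F}_\ell^{*})^n\cup\{0\}$, since it must equal $y^n\bmod\ell$. This extra condition \emph{can} eliminate the obstructing residue class when $n$ differs from the genuine exponent (for instance, $13^2-41=2^7$ is generically not an $11$th power modulo such $\ell$). With this refinement the sieve does empty for most $n$, and any stubborn small $n$ are then handled by reducing to degree-$n$ Thue--Mahler equations. This is precisely the route the paper takes: for $q\in\{41,89,97\}$ it invokes \citep[Proposition~14.1]{BeSi} directly, and for $q=17$ it applies the refined sieve via \citep[Lemma~14.3]{BeSi} for $n>7$ and then solves three explicit degree-$7$ Thue--Mahler equations for $n=7$.
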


We note that \citep[Proposition~14.1]{BeSi} also provides information on the parity of the exponent $k$;  we will not have use of this.


\begin{proof} When $q \ne 17$, this follows immediately from  \citep[Proposition~14.1]{BeSi}. For $q=17$, we use exactly the same method to achieve the desired result. Using \citep[Lemma~14.3]{BeSi} deals with all $n > 7$. For the case $n = 7$, we start by applying \citep[Lemma~14.6]{BeSi}, and following the arguments of \citep[pp. 32--34]{BeSi} leaves us needing to solve three Thue--Mahler equations, each of degree $7$. To be precise, we need to solve
$$
a_7X^7+a_6X^6Y+a_5X^5Y^2+a_4X^4Y^3+a_3X^3Y^4+a_2X^2Y^5+a_1XY^6+a_0Y^7=17^k,
$$
where $(a_7,a_6,a_5,a_4,a_3,a_2,a_1,a_0)$ is one of 
$$
(139,1519,7119,18515,28945,27069,14133,3137),
$$
$$
(17, 189, 861, 2345, 3395, 3591, 1519, 467)
$$
or
$$
(1,  189, 14637,  677705, 16679635, 299923911, 2156762783, 11272244723).
$$
Using the code and techniques of \citep{ThueMahler}, we find that the first two of these equations yield no solutions, and the third has only the solution $(X,Y)=(1,0)$,  which corresponds to the identity $(-71)^2 - 17^3 = 2^7$. These computations took approximately 2000 seconds for the first equation, and just over one minute for each of the second and third, running Magma V2.24-5 on a 2019 MacBook Pro. 
\end{proof}

To proceed further, we will now turn our attention to
a new Frey--Hellegouarch curve, defined over the real
quadratic field $\Q(\sqrt{q})$.

\section{Constructing a Frey--Hellegouarch \texorpdfstring{$\Q$}{}-Curve}

Let $q \in \{17,41,89,97\}$ and write $M = \Q(\sqrt{q})$. In this section, we construct a new Frey--Hellegouarch curve, this time defined over $M$. This curve will be a $\Q$-curve, i.e. an elliptic curve, defined over some number field, that is isogenous over $\overline{\Q}$ to all of its Galois conjugates. The $\Q$-curve we define will in fact be \emph{completely defined} over $M$, meaning that the isogeny between the curve and its conjugate is also defined over $M$. We will start by following the approach suggested in \citep[pp.~47--48]{BeSi}.

In each case $M$ has class number $1$. We write $\mathcal{O}_M$ for the ring of integers of $M$. We write $\sigma$ for the non-trivial element of $\Gal(M / \Q)$, and for $z \in M$ we will use the notation $\overline{z} = \sigma(z)$. Although we may suppose that $n > 1000$ by Lemma \ref{>1000}, we will for the moment simply assume $n \geq 7$ (and $n$ prime) as in the previous section.  We will write $\delta$ for a fundamental unit for $\mathcal{O}_M$.  For each value of $q$, the rational prime $2$ splits in $\mathcal{O}_M$.
Let 
\[ \gamma = \begin{cases*} (-3 + \sqrt{q})/2 & if $q = 17$, \\ (-19 - 3 \sqrt{q}) /2 & if $q = 41$, \\ (9 + \sqrt{q})/2 & if $q = 89$, \\ (325 + 33 \sqrt{q})/2 & if $q = 97$.   \end{cases*} 
\] 
Here, we have chosen $\gamma$ such that it is a generator for one of the two prime ideals above $2$, and such that 
$$
 \gamma \overline{\gamma} = -2, \; \;  \overline{\gamma} \equiv -1 \pmod{ \gamma^2} \; \; \mbox{ and } \; \;  \sqrt{q} \equiv -1  \pmod{\gamma^2}. 
 $$
The relevance of these properties will be seen in due course. 

We will now factor the left-hand side of equation (\ref{maineq}) over $M$. Writing $y = 2y_1$, we have \[ \left( \frac{x+q^k \sqrt{q} }{2} \right) \left( \frac{x - q^k \sqrt{q} }{2} \right) = 2^{n-2} y_1^n. \] Since $ q \equiv 1 \pmod{4}$, each factor on the left-hand side is in $\mathcal{O}_M$. Since $q \nmid x$, we see that \begin{equation}\label{coprim} \gcd \left( \frac{x+q^k \sqrt{q} }{2}, \frac{x-q^k \sqrt{q} }{2}  \right) = 1. \end{equation} Now, because $\overline{\gamma} \equiv -1 \pmod{ \gamma^2}$ and $ x \equiv 1 \pmod{4}$, we see that $\gamma$ must divide $(x + q^k \sqrt{q})/2$, and so $\overline{\gamma}$ will divide  $(x - q^k \sqrt{q})/2$. Then by coprimality of the two factors, we have \begin{equation*}  \frac{x+q^k \sqrt{q} }{2}  = \delta^r \gamma^{n-2} \alpha^n,
\end{equation*} for some $r \in \mathbb{\Z}$ and $\alpha \in \mathcal{O}_M$. We then obtain that \begin{equation}\label{newnn2}
q^k\sqrt{q} = \delta^r \gamma^{n-2} \alpha^n - \overline{\delta}^r \overline{\gamma}^{n-2} \overline{\alpha}^n.
\end{equation} Treating this equation as a generalized Fermat equation of signature $(n,n,n)$ with solution $(\alpha, \overline{\alpha},1)$ would lead to a Frey--Hellegouarch curve isogenous to the rational Frey--Hellegouarch curve $G$ defined by (\ref{RatFrey}). Instead, we will view this as an equation of signature $(n,n,2)$.

Write $k = 2m$ or $2m+1$ according to whether $k$ is even or odd. Let \begin{equation*}
w = \begin{cases*} \dfrac{(x+q^{2m}\sqrt{q})}{2} \cdot  \sqrt{q}^3  = \delta^r \gamma^{n-2} \alpha^n \sqrt{q}^3 & if $k = 2m$, \\
 \dfrac{(x+q^{2m+1}\sqrt{q})}{2} \cdot \sqrt{q} =  \delta^r \gamma^{n-2} \alpha^n \sqrt{q} & if $k = 2m+1$. \end{cases*}
\end{equation*} From equation (\ref{newnn2}), we deduce that \begin{equation*} \gcd(w, \overline{w}) = \begin{cases*} \sqrt{q}^3 & if $k = 2m$, \\ \sqrt{q} & if $k = 2m+1$. \end{cases*} 
\end{equation*} We also have \begin{equation*} w + \overline{w} = q^{2m+2}. \end{equation*}
One can attach to any equation of the form $w+\overline{w} = u^2$, with $u \in \Q$, a Frey--Hellegouarch $\Q$-curve; see, by way of example,  \citep[pp.~199,~203--204]{vanlangen}. We take our $\Q$-curve to be \begin{equation}\label{E}
E = E_{x,m} = E_{x,m,q}: \; Y^2 = X^3 + 2\gamma q^{m+1}X^2 + \gamma^2 w X.
\end{equation} This $\Q$-curve is a quadratic twist by $\gamma$ of the $\Q$-curve one would obtain applying the recipe in \citep[p.~199]{vanlangen}. The reason for twisting by $\gamma$ is to ensure the curve $E$ is \emph{completely defined} over $M$, meaning the isogeny between $E$ and its Galois conjugate is also defined over $M$. We have 
\begin{equation} 
\overline{E} = \overline{E}_{x,m} = \overline{E}_{x,m,q}: \; Y^2 = X^3 + 2\overline{\gamma} q^{m+1}X^2 + \overline{\gamma}^2 \overline{w} X,
\end{equation} and a $2$-isogeny, defined over $M$, \begin{equation}\label{varphi} \varphi_\sigma: \overline{E} \rightarrow E,  \qquad (X,Y) \mapsto \left( \frac{X^2 + 2 \overline{\gamma}q^{m+1}X + \overline{\gamma}^2 \overline{w}}{\overline{\gamma}^2 X} , \frac{(X^2 -\overline{\gamma}^2 \overline{w}) Y }{\overline{\gamma}^3 X^2} \right). \end{equation}

We would like to compute the conductor $\mathcal{N}_E$ of $E$. We first note that the curve $E$ has the following standard invariants : 
\begin{equation*} c_4  = \gamma^6 \overline{\gamma}^4(w + 4 \overline{w}), \; \;  c_6 = \gamma^9 \overline{\gamma}^6 (w  - 8 \overline{w}) q^{m+1} \; \; \mbox{ and } \; \;  \Delta = \gamma^{12} \overline{\gamma}^6 w^2 \overline{w}. 
\end{equation*}

\begin{lemma}\label{Conductor1} Let $n \geq 11$. The curve $E$ has multiplicative reduction at both primes of $M$ above $2$. As a consequence, $E$ does not have complex multiplication.
\end{lemma}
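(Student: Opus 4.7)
My plan is to compute the $\gamma$-adic and $\overline{\gamma}$-adic valuations of the standard invariants $c_4$, $c_6$, $\Delta$ of the model (\ref{E}), and then to minimalise by a change of coordinates $(X,Y) \mapsto (u^2 X, u^3 Y)$. Tate's criterion then gives multiplicative reduction at a prime $\mathfrak{p}$ above $2$ exactly when the resulting minimal model satisfies $v_{\mathfrak{p}}(c_4) = 0$ and $v_{\mathfrak{p}}(\Delta) > 0$.

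The key input is the factorisation $(x+q^k\sqrt{q})/2 = \delta^r \gamma^{n-2} \alpha^n$ established in Section~2, combined with the coprimality of this element and its conjugate over $\mathcal{O}_M$. Taking $\gamma$-adic valuations in $((x+q^k\sqrt{q})/2)\cdot((x-q^k\sqrt{q})/2) = 2^{n-2}y_1^n$ and using that $\gamma \mid (x+q^k\sqrt{q})/2$ while $\overline{\gamma} \mid (x-q^k\sqrt{q})/2$, I get $v_\gamma(\alpha) = v_2(y_1) \geq 0$ and $v_{\overline{\gamma}}(\alpha) = 0$. Since $w = \delta^r \gamma^{n-2}\alpha^n \cdot \sqrt{q}^j$ with $j \in \{1,3\}$ and $\sqrt{q}$ is a unit at every prime above $2$, this yields $v_\gamma(w) \geq n-2$ and $v_{\overline{\gamma}}(w) = 0$, with the roles swapped for $\overline{w}$.

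The hypothesis $n \geq 11$ now enters through $v_\gamma(w) \geq 9$, which strictly exceeds $v_\gamma(4\overline{w}) = 2$ and $v_\gamma(8\overline{w}) = 3$. The ultrametric inequality pins down $v_\gamma(w + 4\overline{w}) = 2$ and $v_\gamma(w - 8\overline{w}) = 3$, whence the formulas in the excerpt give $v_\gamma(c_4) = 8$, $v_\gamma(c_6) = 12$, and $v_\gamma(\Delta) \geq 2n+4$. A substitution with $v_\gamma(u) = 2$ subtracts $8$, $12$, and $24$, respectively, producing a minimal Weierstrass model at $(\gamma)$ with $v_\gamma(c_4) = 0$ and $v_\gamma(\Delta) \geq 2n - 20 > 0$, hence multiplicative reduction there. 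An entirely analogous computation at $(\overline{\gamma})$ --- where now $v_{\overline{\gamma}}(c_4) = 4$, $v_{\overline{\gamma}}(c_6) = 6$, and $v_{\overline{\gamma}}(\Delta) \geq n+4$, with minimalising substitution $v_{\overline{\gamma}}(u) = 1$ --- gives multiplicative reduction at $(\overline{\gamma})$ as well.

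The CM claim is then immediate from the standard fact that an elliptic curve over a number field with complex multiplication has integral $j$-invariant, hence potentially good reduction at every finite place, and so cannot have multiplicative reduction anywhere. I expect no real obstacle beyond careful bookkeeping of valuations; the role of the bound $n \geq 11$ is precisely to guarantee both that $v_\gamma(w)$ strictly dominates the low-order terms $4\overline{w}, 8\overline{w}$ in the formulas for $c_4, c_6$, and that the discriminant valuation remains positive after the minimalising substitution.
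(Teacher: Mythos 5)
Your valuation bookkeeping for $c_4$, $c_6$, $\Delta$ and the initial factorisation argument match the paper's, and the CM deduction from a non‑integral $j$‑invariant is sound. However, the minimalisation step contains a genuine gap that the paper takes pains to avoid. You propose to minimalise at $(\gamma)$ by the scaling $(X,Y)\mapsto(u^2X, u^3Y)$ with $v_\gamma(u)=2$, and then read off multiplicative reduction from $v_\gamma(c_4)=0$, $v_\gamma(\Delta)>0$. But that scaling does not produce an integral Weierstrass model: the coefficient $a_2 = 2\gamma q^{m+1}$ has $v_\gamma(a_2)=2$, so $a_2/u^2$ has $\gamma$‑adic valuation $-2$. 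A minimal model achieving $v_\gamma(c_4)=0$ (if it exists) must therefore be obtained by a general change of coordinates introducing nonzero $a_1$ and $a_3$, and at residue characteristic $2$ it is not automatic that such an integral model exists; this is exactly the content of Kraus's criterion and is where Tate's algorithm becomes delicate. In other words, from the data $v_\gamma(j)<0$ one may only conclude \emph{potentially} multiplicative reduction; the curve could still be the ramified quadratic twist of a Tate curve, hence additive.

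The paper resolves this by invoking the standard criterion (Silverman, \emph{Advanced Topics}, Ch.~V) that a curve with potentially multiplicative reduction has genuinely multiplicative reduction at $\mathfrak{p}$ precisely when $M(\sqrt{-c_6/c_4})/M$ is unramified at $\mathfrak{p}$. The bulk of the paper's proof is then a careful $2$‑adic computation: it writes $-c_6/c_4 = \eta^2\kappa$ with $\eta\in M^\times$, shows $\ord_\gamma(\kappa)=\ord_{\overline{\gamma}}(\kappa)=0$, and verifies $\kappa\equiv 1\pmod{\gamma^2}$ and $\kappa\equiv 1\pmod{\overline{\gamma}^2}$ (this is where the normalisations $\overline{\gamma}\equiv -1\pmod{\gamma^2}$ and $\gamma\equiv -1\pmod{\overline{\gamma}^2}$ are used), whence $(1+\sqrt{\kappa})/2$ is integral at both primes and the extension is unramified. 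Your proposal skips this step entirely, and it is the essential one; you would need to supply an argument of this kind (or apply Kraus's criterion, or run Tate's algorithm in full) to close the gap.
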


\begin{proof} We recall that $\gamma$ and $\overline{\gamma}$ generate the two prime ideals of $M$ above $2$. The model $E$ is not minimal at these primes, but we will not actually need to write down a minimal model.

We start by noting that $\ord_\gamma(\alpha) = \ord_{\overline{\gamma}}(\overline{\alpha})$. Using (\ref{coprim}), we also see that $\ord_{\overline{\gamma}}(\alpha) = \ord_\gamma(\overline{\alpha}) = 0$. We then have 
$$
\ord_\gamma(w) = \ord_{\overline{\gamma}} (\overline{w}) = n - 2 + n \ord_\gamma (\alpha) ~\text{ and }~  \ord_{\overline{\gamma}}(w) = \ord_{\gamma} (\overline{w})=0,
$$
whence we deduce that 
\begin{align*} \ord_\gamma(c_4) & = 6 + \ord_\gamma(w+4\overline{w}) = 8, \\ \ord_\gamma(c_6) & = 9 + \ord_\gamma(w-8\overline{w}) = 12, \\ \ord_\gamma(\Delta) & = 12 + 2(n-2+n \ord_\gamma(\alpha)) = 8 + 2n(1+\ord_\gamma(\alpha)). 
\end{align*} 
Similarly, we see that 
$$
\ord_{\overline{\gamma}}(c_4) = 4, \; \;   \ord_{\overline{\gamma}}(c_6) = 6 \; \; \mbox{ and } \; \;   \ord_{\overline{\gamma}}(\Delta) = 4 + n(1+\ord_{\overline{\gamma}}(\overline{\alpha}) ).
$$
Writing $j = c_4^3 / \Delta$ for the $j$-invariant of $E$, we have that \[ \ord_\gamma(j) = 16 - 2n(1+\ord_\gamma(\alpha)) < 0 ~ \text{ and } ~ \ord_{\overline{\gamma}}(j) = 8 - n(1+\ord_\gamma(\alpha)) < 0,  \] since $n \geq 11$ by assumption. We note that these inequalities will in fact hold whenever $n \geq 9$. We conclude that $E$ has potentially multiplicative reduction at each prime above $2$. We can in fact already see at this point that $E$ does not have complex multiplication, since the $j$-invariant of $E$ is non-integral.

In order to show that $E$ has multiplicative reduction at each prime above $2$, it will be enough to prove that the extension $M(\sqrt{-c_6/c_4})/M$ is unramified at $\gamma$ and $\overline{\gamma}$ (see \citep[Lemma~4.3]{small_real} for example). 
We have, recalling that $\gamma \overline{\gamma} = -2$, 
\begin{align*} - \frac{c_6}{c_4} & = - \gamma^3 \overline{\gamma}^2 \cdot \frac{w-8\overline{w}}{w+4\overline{w}} \cdot  \sqrt{q}^{2m+2} = -\gamma^3 \overline{\gamma}^2 \cdot \frac{w + \gamma^3 \overline{\gamma}^3 \overline{w}}{w+ \gamma^2 \overline{\gamma}^2 \overline{w}} \cdot \sqrt{q}^{2m+2}
 \\ & = -\frac{\gamma^4}{\gamma} \overline{\gamma}^2 \cdot \frac{w / \gamma^3 + \overline{\gamma}^3 \overline{w}}{w / \gamma^3+ \overline{\gamma}^2  \overline{w} / \gamma } \cdot \sqrt{q}^{2m+2} = - (\gamma^2 \overline{\gamma} \sqrt{q}^{m+1})^2 \cdot \frac{w / \gamma^3 + \overline{\gamma}^3 \overline{w}}{w / \gamma^2+ \overline{\gamma}^2  \overline{w} }. \end{align*} Write\[ \eta = \gamma^2 \overline{\gamma} \sqrt{q}^{m+1} \quad \text{ and } \quad \kappa =  - \frac{w / \gamma^3 + \overline{\gamma}^3 \overline{w}}{w / \gamma^2+ \overline{\gamma}^2  \overline{w} }, \] so that $M( \sqrt{-c_6 / c_4 }) = M(\sqrt{\kappa}) = M((1+\sqrt{\kappa})/2)$. 

Consider the numerator of $\kappa$. We have that $\ord_\gamma(\overline{\gamma}^3\overline{w}) = 0$, and 
$$
\ord_\gamma( w / \gamma^3) = n - 2 + n \ord_\gamma(\alpha) - 3  = n - 5 + n \ord_\gamma(\alpha) \geq 6 > 0,
$$
as $n \geq 11$, so $\gamma$ does not divide the numerator and, similarly for the denominator. So $\ord_\gamma(\kappa) = 0$, and similarly, $\ord_{\overline{\gamma}}(\kappa) = 0$. We have that $\ord_\gamma(w / \gamma^3), \; \ord_\gamma(w/\gamma^2) > 2$, so $\kappa \equiv -\overline{\gamma}  \equiv 1 \pmod{\gamma^2}$ by our choice of $\gamma$. We also have that $\kappa \equiv -1/\gamma \equiv 1 \pmod{\overline{\gamma}^2}$ since $\gamma \equiv -1 \pmod{\overline{\gamma}^2}$.

Now, $(1+\sqrt{\kappa})/2$ satisfies the polynomial \[ X^2 - X + \frac{1-\kappa}{4}.\] This polynomial has discriminant $\kappa$ and is integral at $\gamma$ and $\overline{\gamma}$. This proves that the extension $M(\sqrt{-c_6/c_4})/M$ is unramified at $\gamma$ and $\overline{\gamma}$. \end{proof}


\begin{lemma}\label{Conductor2} Let $n \geq 11$.
\begin{enumerate}
\item If $\pi \nmid 2q \alpha \overline{\alpha}$ is a prime of $M$, then $E$ has good reduction at $\pi$;
\item If $\pi \nmid 2q$ is a prime of $M$ dividing $\alpha$ or $\overline{\alpha}$, then the model of $E$ is minimal at $\pi$, the prime $\pi$ is of multiplicative reduction for $E$, and $n \mid \ord_\pi(\Delta)$;
\item $E$ has additive, potentially good reduction at $\sqrt{q} \cdot \mathcal{O}_M$. In particular, we have that $\ord_{\sqrt{q}}(\mathcal{N}_E) = 2$, since $q \geq 5$.
\end{enumerate} 
\end{lemma}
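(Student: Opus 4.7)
The plan is to verify all three assertions by computing the $\pi$-adic valuations of the standard invariants
\[ c_4 = \gamma^6\overline{\gamma}^4(w+4\overline{w}), \quad c_6 = \gamma^9\overline{\gamma}^6(w-8\overline{w})q^{m+1}, \quad \Delta = \gamma^{12}\overline{\gamma}^6 w^2\overline{w} \]
directly at each type of prime, and then invoking the usual reduction-type criteria together with Tate's minimality test.

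Two preliminary observations will be used throughout. First, multiplying the factorization $(x+q^k\sqrt{q})/2 = \delta^r\gamma^{n-2}\alpha^n$ by its Galois conjugate and comparing with $x^2-q^{2k+1}=y^n$ shows that $\alpha\overline{\alpha}=\pm y_1$; since the two factors on the left are coprime and the $\gamma$-terms only involve primes above $2$, it follows that $\alpha$ and $\overline{\alpha}$ are coprime at every prime of $M$ not above $2$. Second, $q\nmid x$ forces $q\nmid y_1$, and since the ideal $(\sqrt{q})$ is Galois-stable, $\sqrt{q}\mid\alpha$ would also give $\sqrt{q}\mid\overline{\alpha}$ and hence $q\mid\alpha\overline{\alpha}=\pm y_1$, a contradiction; so $\sqrt{q}\nmid\alpha\overline{\alpha}$.

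Part (1) is then immediate: for $\pi\nmid 2q\alpha\overline{\alpha}$, every factor appearing in $w$ and $\overline{w}$ (namely $\delta,\gamma,\overline{\gamma},\alpha,\overline{\alpha},\sqrt{q}$) is a $\pi$-unit, so $\ord_\pi(\Delta)=0$ and $E$ has good reduction. For Part (2), supposing $\pi\mid\alpha$ with $\pi\nmid 2q$ (the case $\pi\mid\overline{\alpha}$ is symmetric), coprimality gives $\pi\nmid\overline{\alpha}$, so $\ord_\pi(w)=n\ord_\pi(\alpha)\geq n$ while $\ord_\pi(\overline{w})=0$. Hence $\ord_\pi(c_4)=\ord_\pi(w+4\overline{w})=0$ and $\ord_\pi(\Delta)=2n\ord_\pi(\alpha)$, yielding a minimal model at $\pi$ (since $\ord_\pi(c_4)<4$), multiplicative reduction, and $n\mid\ord_\pi(\Delta)$.

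For Part (3), at $\mathfrak{q}=(\sqrt{q})$ a direct substitution gives $\ord_\mathfrak{q}(w)=\ord_\mathfrak{q}(\overline{w})\in\{1,3\}$ according to the parity of $k$, and both values are positive. Since $4$ is a $\mathfrak{q}$-unit, $\ord_\mathfrak{q}(c_4)=\ord_\mathfrak{q}(w+4\overline{w})\geq\ord_\mathfrak{q}(w)>0$, so $E$ has additive reduction at $\mathfrak{q}$. Moreover $\ord_\mathfrak{q}(\Delta)=3\ord_\mathfrak{q}(w)$, whence
\[ \ord_\mathfrak{q}(j)=3\ord_\mathfrak{q}(c_4)-\ord_\mathfrak{q}(\Delta)\geq 0, \]
so $j$ is $\mathfrak{q}$-integral and the reduction is potentially good. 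Residue characteristic $q\geq 5$ rules out wild ramification, and the tame conductor formula for additive potentially good reduction then gives $\ord_\mathfrak{q}(\mathcal{N}_E)=2$. No serious obstacle is anticipated; the only care needed is separating the two parity cases of $k$ in Part (3), and in both cases the computation proceeds uniformly.
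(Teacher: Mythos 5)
Your proposal is correct and follows essentially the same route as the paper: compute $\pi$-adic valuations of $c_4$, $c_6$, $\Delta$ at each class of prime and read off reduction types. The one step you leave implicit in Part (3) that the paper states explicitly is that the given model is minimal at $(\sqrt{q})$ (which follows from $\ord_{\sqrt{q}}(\Delta)\leq 9<12$); this is needed before reading off additive reduction from $\sqrt{q}\mid c_4$, but it is immediate from your own computations.
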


\begin{proof}
Let $\pi \nmid 2q$ be a prime of $M$. So $\pi \nmid \gamma \overline{\gamma} \sqrt{q}$. If $\pi \nmid \alpha \overline{\alpha}$, then $\pi \nmid \Delta$, so $\pi$ is a prime of good reduction for $E$, proving the first part.

Suppose instead that $\pi \nmid 2q$, but that $\pi \mid \alpha \overline{\alpha}$. Then $\pi \mid \Delta$. By (\ref{coprim}), we see that $\gcd(\alpha, \overline{\alpha}) = 1$. So either $\pi \mid  \alpha$ or $\pi \mid \overline{\alpha}$, but not both. So $\pi \mid w$ or $\pi \mid \overline{w}$, but not both. It follows that $\pi \nmid c_4$. So $E$ is minimal at $\pi$, and $\pi$ is a prime of multiplicative reduction for $E$. Moreover, $\ord_\pi(\Delta) = 2n \ord_\pi(\alpha) + n \ord_\pi(\overline{\alpha}) \equiv 0 \pmod{n}$, as required.

Finally, we consider $\sqrt{q}$. We have that $\ord_{\sqrt{q}}(w) = \ord_{\sqrt{q}}(\overline{w}) = 1$ or $3$ according to whether $k$ is odd or even. So $\ord_{\sqrt{q}}(\Delta) = 3$ or $9$, and $\sqrt{q} \mid c_4$. It follows that $E$ is minimal with additive reduction at $\sqrt{q}$. To see that we have potentially good reduction, we show that $\ord_{\sqrt{q}}(j)  \geq 0$. We must show that $3\ord_{\sqrt{q}}(c_4) \geq \ord_{\sqrt{q}}(\Delta)$, and this inequality holds since \[\left(\ord_{\sqrt{q}}(\Delta), \ord_{\sqrt{q}}(c_4) \right) = \begin{cases} (9, \geq 3) &  \text{if $k$ is even,} \\ (3, \geq 1) & \text{if $k$ is odd}. \\ \end{cases} \] \end{proof}

Combining Lemmas \ref{Conductor1} and \ref{Conductor2}, we have that \[ \mathcal{N}_E = \left( \gamma \overline{\gamma} \cdot \sqrt{q}^2 \cdot  \mathrm{Rad}_2(\alpha \overline{\alpha}) \right) \cdot \mathcal{O}_M, \] where $\mathrm{Rad}_2(\alpha \overline{\alpha})$ denotes the product of all prime ideals of $M$ dividing $\alpha \overline{\alpha}$ but not dividing $2$. 

\section{Irreducibility and Level-Lowering}

We would like to apply certain level-lowering results to $E$ in order to relate $E$ to a newform of a particular level and character. We must first prove irreducibility of $\overline{\rho}_{E,n}$, the mod $n$ Galois representation of $E$. 
We highlight the fact that we will use the rational Frey--Hellegouarch curve $G$ to help us prove the irreducibility of $\overline{\rho}_{E,n}$.

\begin{proposition}\label{irreduc} Let $q \in \{17,41,89,97 \}$. The representation $\overline{\rho}_{E,n}$ is irreducible for $n \geq 11$.
\end{proposition}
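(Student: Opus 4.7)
The plan is to argue by contradiction, so suppose $\overline{\rho}_{E,n}$ is reducible. Its semisimplification is then $\theta \oplus \theta'$ for characters $\theta, \theta' : G_M \to \mathbb{F}_n^\times$ with $\theta \theta' = \chi_n$, the mod-$n$ cyclotomic character; equivalently, $E$ admits an $M$-rational cyclic isogeny of degree $n$ with kernel acted on by $\theta$.

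The first step is to pin down the conductor of $\theta$ using Lemmas~\ref{Conductor1} and~\ref{Conductor2}. At any prime $\pi \nmid 2qn$ of $M$, the curve $E$ has good or multiplicative reduction; in the multiplicative case we have $n \mid \ord_\pi(\Delta_E)$, so the Tate uniformisation produces an $n$-th power Tate parameter and $\theta$ is unramified at $\pi$. At the tame multiplicative primes $\gamma$ and $\overline{\gamma}$, Serre's analysis of the filtration on $E[n]$ gives $\theta|_{I_v} \in \{1,\, \chi_n|_{I_v}\}$ for $v \in \{\gamma, \overline{\gamma}\}$. At $\sqrt{q}$, the reduction is additive potentially good with residue characteristic $q$ coprime to $n$, so the projective image of $I_{\sqrt{q}}$ in $\mathrm{PGL}_2(\mathbb{F}_n)$ has order dividing $24$; combined with $\det \overline{\rho}_{E,n} = \chi_n$, this forces $\theta^{24}|_{I_{\sqrt{q}}}$ to coincide with a power of $\chi_n|_{I_{\sqrt{q}}}$, which is trivial.

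The second step is to exploit the $\Q$-curve structure. The $2$-isogeny $\varphi_\sigma : \overline{E} \to E$ of (\ref{varphi}) is an $M$-isomorphism on $n$-torsion (since $n$ is odd) intertwining $\overline{\rho}_{E,n}$ with its $\sigma$-conjugate. Hence the unordered pair $\{\theta, \theta'\}$ is preserved by $\chi \mapsto \chi \circ \sigma$, giving either $\theta = \theta^\sigma$ (so $\theta$ descends to a character of $G_\Q$) or $\theta \cdot \theta^\sigma = \chi_n$. Combined with step one, a suitable power $\theta^N$ (with $N$ an explicit small multiple of $24$) is a character of $G_M$ unramified outside the primes above $n$. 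Since each $M = \Q(\sqrt{q})$ has class number one, class field theory then determines $\theta^N$ up to a small explicit ambiguity controlled by the fundamental unit $\delta$ and by the decomposition of $n$ in $\mathcal{O}_M$.

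To conclude, we compare with Frobenius traces on $E$. Choosing small rational primes $\ell$ that split in $M$ and are coprime to $2q$, and writing $\pi$ for a prime of $M$ above $\ell$, the congruence $a_\pi(E) \equiv \theta(\mathrm{Frob}_\pi) + \theta'(\mathrm{Frob}_\pi) \pmod{\mathfrak n}$ (for $\mathfrak n$ a prime of $\overline{\Q}$ above $n$), together with the description of $\theta^N$ from the previous step, yields, for each $q \in \{17,41,89,97\}$, an explicit nonzero integer $B_q$ that $n$ must divide; a short \texttt{Magma} enumeration over the residue classes of $(x,k)$ modulo small auxiliary moduli then checks that no prime $n \geq 11$ divides any $B_q$. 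The main obstacle will be the dependence of $E$ on the unknown solution $(x,k)$, which forces the Frobenius traces to be treated as functions of $(x,k) \bmod \pi$ and $k \bmod n$; the bookkeeping at $\sqrt{q}$ and for the $\sigma$-twist of $\theta$ is what makes the local-to-global bound delicate but effective.
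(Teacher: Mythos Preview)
Your approach is genuinely different from the paper's, and while the outline is a recognisable ``isogeny--character'' strategy, the crucial step is left unexecuted.

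The paper does not attempt a direct character-theoretic bound. Instead, assuming $\overline{\rho}_{E,n}$ reducible and $n \in \{11\} \cup \{n > 13\}$, it invokes Ellenberg's Proposition~3.2: for a $\Q$-curve over a quadratic field with a degree-$2$ isogeny to its conjugate, reducibility of $\overline{\rho}_{E,n}$ forces every prime of $M$ of residue characteristic $>3$ to be of potentially good reduction. Combined with Lemma~\ref{Conductor2}, this forces $y$ to be supported on $\{2,3\}$. The rational Frey curve $G$ then rules out $3 \mid y$: from $\overline{\rho}_{G,n} \sim \overline{\rho}_{F_q,n}$ one gets $n \mid 4 \pm a_3(F_q)$, which is impossible for $n \geq 11$. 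Hence $y = 2^s$, so $x^2 - 2^{ns} = q^{2k+1}$; a theorem of Ivorra gives $k=0$, and $x^2 = 2^{ns} + q$ is then finished by computing integral points on the three Mordell curves $Y^2 = X^3 + 2^{2i}q$, $i \in \{0,1,2\}$. The case $n=13$ is handled separately via $M$-points on $X_0(26)$. This route is short precisely because Ellenberg's structural result converts reducibility into a concrete Diophantine constraint, and the auxiliary rational Frey curve is available to clean up the prime $3$.

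Your strategy, by contrast, tries to pin down $\theta$ itself. The genuine gap is at the primes above $n$. You correctly obtain that $\theta^N$ is unramified away from $n$, but then write that class field theory determines $\theta^N$ ``up to a small explicit ambiguity controlled by the fundamental unit $\delta$ and by the decomposition of $n$ in $\mathcal{O}_M$''. This is precisely where the work lies and you have not done it: without analysing $\theta|_{I_{\mathfrak n}}$ for $\mathfrak n \mid n$ (via Raynaud/finite-flatness in the good case, Tate uniformisation in the multiplicative case, and ruling out supersingular reduction by irreducibility on inertia), the ray class group with modulus supported at $\mathfrak n$ has size of order $n$, so $\theta^N$ is not determined at all and no bound on $n$ follows. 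The standard remedy --- showing that, up to swapping $\theta \leftrightarrow \theta'$, the character $\theta$ is unramified at each $\mathfrak n \mid n$, and then handling the possible mismatch when $n$ splits in $M$ --- must be carried out explicitly before any Frobenius comparison can yield a finite $B_q$. Even granting that, your final step is an asserted \texttt{Magma} enumeration whose nonvanishing you have not checked; given that $E$ varies with $(x,k)$, it is not a priori clear that every residue class produces a nonzero contribution. The paper's argument avoids all of this by reducing at once to a finite explicit search.
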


\begin{proof} Suppose that $\overline{\rho}_{E,n}$ is reducible with $n \geq 11$. If $n = 13$, then arguing as in \citep[p.~215]{vanlangen}, $E$ would give rise to a $\Q(\sqrt{13})$-point on the modular curve $X_0(26)$, a contradiction, since $q \ne 13$. We will therefore suppose that $n = 11$ or $n > 13$. Since $E$ is a $\Q$-curve defined over a quadratic field and the isogeny $\varphi_\sigma$ has degree $2$, \citep[Proposition~3.2]{Ellenberg} tells us that every prime of $M$ of characteristic $>3$ is a prime of potentially good reduction for $E$. 

We first show that $y$ must be a power of $2$. If $\ell >3$ is a prime with $\ell \mid y$, then each prime of $M$ above $\ell$ will divide either $\alpha$ or $\overline{\alpha}$, and it follows (by Lemma \ref{Conductor2}) that we have a prime of characteristic $\ell > 3$ of multiplicative reduction for $E$, a contradiction. Next, suppose that $3 \mid y$. Then $3$ is a prime of multiplicative reduction for the rational Frey--Hellegouarch curve $G$ defined in (\ref{RatFrey}). From the isomorphism $\overline{\rho}_{G,n} \sim \overline{\rho}_{F,n}$, for $F$ an elliptic curve of level $2q$ in Table \ref{TabRat}, we have, writing $f$ for the newform corresponding to $F$, that 
\begin{equation} \label{Hasse!}
n \mid 3+1 \pm a_3(f).
\end{equation}
From the Hasse bound, we have that $\abs{a_3(f)} \leq 2 \sqrt{3}$ and hence the right-hand-side of (\ref{Hasse!}) is a nonzero integer, bounded above by $4+2\sqrt{3}$. This contradicts $n \geq 11$ and so we may conclude that $y$ is necessarily a power of $2$, say $y = 2^s$, with $ s \geq 1$ since $y$ is even.

We thus have that $x^2 - 2^{ns} = q^{2k+1}$. By \citep[p.~328]{Ivorra}, this equation has no solutions with $n \geq 11$, provided  $2k+1 >1$. It follows that  $k = 0$ and we have \[ x^2 = 2^{ns} + q. \] Multiplying both sides by $2^2$ or $2^4$ if necessary, we obtain an integral point on one of the following elliptic curves: 
\begin{align*}
Y^2  = X^3 + q, \; \; 
Y^2  = X^3 + 2^2q \; \; \mbox{ or } \; \; 
Y^2  = X^3 + 2^4 q .
\end{align*}
Computing the integral points on each of these curves for each value of $q$ using \texttt{Magma} quickly leads to a contradiction.
\end{proof}

\begin{remark} At this point, we could apply standard level-lowering results over $M$ (see \citep[Theorem~7]{asym} for example) to relate $\overline{\rho}_{E,n}$ to the Galois representation of a Hilbert newform at level $\gamma \overline{\gamma} \sqrt{q}^2 \cdot \mathcal{O}_M$. For $q = 17, 41, 89,$ and $97$, the dimensions of these spaces of newforms are $46, 1093, 9631,$ and $26378$ respectively. Computing the newform data at these levels using \texttt{Magma} is certainly possible for $q = 17$, and would also likely be achievable for $q = 41$ by working directly with Hecke operators (see \citep[p.~342--343]{MJ} for example), but for $q = 89$, and especially for $q = 97$, the dimensions are likely too large for current computations. The $\Q$-curve approach we now present will allow us to work with classical modular forms and make the resulting computations feasible.
\end{remark}

We start by computing some data associated to the $\Q$-curve $E$, which we recall does not have complex multiplication (by Lemma \ref{Conductor1}). We will use the notation and terminology of Quer \citep{Quer}. We note that we are in a similar set-up to that of \citep[pp.~8--9]{BCDY}. As in the previous section, we write $\mathrm{Gal}(M/\Q) = \{ 1, \sigma \}$. We have the isogeny $\varphi_\sigma : \overline{E} \rightarrow E$ given by (\ref{varphi}), and $\varphi_1$ will denote the identity morphism on $E$. Write $c: \Gal(M/\Q) \rightarrow \Q^*$ for the $2$-cocycle given by \[ c(s,t) = \varphi_s \, {}^s\varphi_t \; \varphi_{st}^{-1}. \] We have that $c(1,1) = c(1,\sigma) = c(\sigma, 1) = 1$. By a direct computation with \texttt{Magma}, we verify that $c(\sigma,\sigma) = \varphi_\sigma ({}^\sigma \varphi_\sigma) = -2$.

Next, define  \[ \beta : \mathrm{Gal}(M / \Q) \rightarrow \overline{\Q}^*, \qquad \beta(1) = 1,~ \beta(\sigma) = \sqrt{-2}. \] This map satisfies  \begin{equation}\label{schur} c(s,t) = \beta(s)\beta(t) \beta(st)^{-1} \quad \text{ for } s,t \in \Gal(M/ \Q). \end{equation} It follows that $\beta$ is a \emph{splitting map} for $c$ (as defined in \citep[p.~298]{Quer}). The \emph{splitting character} associated to $\beta$ is then defined by \[ \epsilon(s) = \beta(s)^2  / \deg(\varphi_s). \] So $\epsilon(1) = 1$ and $\epsilon(\sigma) = -1$, and $\epsilon$  is the quadratic Galois character associated to $M$. Since $q \equiv 1 \pmod{4}$, we have $M \subset \Q(\zeta_q)$, and we may also view $\epsilon$ as a quadratic Dirichlet character $\epsilon: (\Z / q\Z)^\times \rightarrow \{\pm1 \}$  of conductor $q$ via  $(\Z/q\Z)^\times \cong \mathrm{Gal}(\Q(\zeta_q)/ \Q) \rightarrow \Gal(M/ \Q)$.

Write $B = \mathrm{Res}_\Q^M(E)$ for the restriction of scalars of $E$ to $\Q$. This is an abelian surface defined over $\Q$ and plays an important role. The relation (\ref{schur}) shows that the $2$-cocycle $c$ has trivial Schur class (i.e. is trivial when viewed as an element of $H^2(\mathrm{Gal}(\overline{\Q}/\Q), \overline{\Q}^*)$ with trivial action). By \citep[Proposition~5.2]{Quer}, we deduce that $B$ decomposes as a product of abelian varieties of $\mathrm{GL}_2$-type. Moreover, the $\Q$-simple abelian variety of $\mathrm{GL}_2$-type, $A_\beta$, attached to $\beta$, which is a quotient of $B$, will have endomorphism algebra $\Q(\beta(1),\beta(\sigma)) = \Q(\sqrt{-2})$ (see \citep[pp.~305--306]{Quer}), and is therefore itself an abelian surface. It follows that $B$ is $\Q$-isogenous to $A_\beta$, so $B$ is $\Q$-simple and of $\mathrm{GL}_2$-type with $\Q$-endomorphism algebra $\Q(\sqrt{-2})$. We record this in the following proposition.
 
\begin{proposition} The abelian surface $B  = \mathrm{Res}_\Q^M(E)$ is $\Q$-simple and of $\mathrm{GL}_2$-type. It has  $\Q$-endomorphism algebra $\Q(\sqrt{-2})$. The conductor of $B$ is given by \[ N_B = \left(2q^2 \,\mathrm{Rad}_2(y) \right)^2. \]
\end{proposition}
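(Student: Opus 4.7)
The first three assertions are essentially already established in the paragraph immediately preceding the proposition: the map $\beta$ with $\beta(\sigma)=\sqrt{-2}$ is a splitting map whose splitting relation (\ref{schur}) exhibits $c$ as a coboundary, so its Schur class in $H^2(\mathrm{Gal}(M/\Q),\overline{\Q}^\ast)$ is trivial, whence Quer's Proposition 5.2 decomposes $B$ (up to $\Q$-isogeny) as a product of abelian varieties of $\mathrm{GL}_2$-type. The $\Q$-simple factor $A_\beta$ attached to $\beta$ has endomorphism algebra $\Q(\beta(1),\beta(\sigma))=\Q(\sqrt{-2})$, a field of degree $2$, and so $\dim A_\beta=2=\dim B$; thus $B\sim_{\Q}A_\beta$ is $\Q$-simple of $\mathrm{GL}_2$-type with endomorphism algebra $\Q(\sqrt{-2})$. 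I would write these three claims up as a single paragraph recapitulating the preceding discussion.

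The new content is therefore the conductor calculation, and my plan is to use Milne's standard formula for the conductor of a Weil restriction:
\[
N_B \;=\; \mathrm{disc}(M/\Q)^{2\dim E}\cdot N_{M/\Q}(\mathcal{N}_E) \;=\; q^2\cdot N_{M/\Q}(\mathcal{N}_E),
\]
using $\mathrm{disc}(M/\Q)=q$ since $q\equiv 1\pmod 4$. From the formula for $\mathcal{N}_E$ displayed at the end of Section~3,
\[
\mathcal{N}_E \;=\; (\gamma)(\overline{\gamma})(\sqrt{q})^2\,\mathrm{Rad}_2(\alpha\overline{\alpha}),
\]
and the multiplicativity of the relative norm gives the easy contributions $N_{M/\Q}((\gamma)(\overline{\gamma}))=N_{M/\Q}((2))=4$ and $N_{M/\Q}((\sqrt{q})^2)=N_{M/\Q}((q))=q^2$.

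The crux is to show that $N_{M/\Q}(\mathrm{Rad}_2(\alpha\overline{\alpha}))=\mathrm{Rad}_2(y)^2$. Multiplying the equation $(x+q^k\sqrt{q})/2=\delta^r\gamma^{n-2}\alpha^n$ by its conjugate and comparing with $(x^2-q^{2k+1})/4=2^{n-2}y_1^n$ shows that the rational primes $p\mid y$ with $p$ odd are exactly the rational primes (other than $q$) lying under the primes of $M$ dividing $\alpha\overline{\alpha}$. For any such $p$, I would observe that $p$ cannot be inert in $M$: an inert prime $\mathfrak{p}=(p)$ is $\sigma$-stable, so $\mathfrak{p}\mid\alpha\overline{\alpha}$ would force $\mathfrak{p}\mid\alpha$ and $\mathfrak{p}\mid\overline{\alpha}$ simultaneously, contradicting $\gcd(\alpha,\overline{\alpha})=1$ (already noted in the proof of Lemma~\ref{Conductor2}). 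Hence every odd prime $p\mid y$ splits in $M$ as $\mathfrak{p}\overline{\mathfrak{p}}$ with $\mathfrak{p}\mid\alpha$ and $\overline{\mathfrak{p}}\mid\overline{\alpha}$, and both appear exactly once in $\mathrm{Rad}_2(\alpha\overline{\alpha})$, contributing $N_{M/\Q}(\mathfrak{p}\overline{\mathfrak{p}})=p^2$. Multiplying over all such $p$ yields $N_{M/\Q}(\mathrm{Rad}_2(\alpha\overline{\alpha}))=\mathrm{Rad}_2(y)^2$, and combining everything gives
\[
N_B \;=\; q^2\cdot 4\cdot q^2\cdot \mathrm{Rad}_2(y)^2 \;=\; (2q^2\,\mathrm{Rad}_2(y))^2,
\]
as required.

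The only real obstacle is the ramification/splitting bookkeeping in the last paragraph; once one sees that inert primes are incompatible with the coprimality of $\alpha$ and $\overline{\alpha}$, the rest is a routine application of the conductor formula for restriction of scalars.
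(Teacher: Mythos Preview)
Your proof is correct and follows exactly the paper's approach, applying Milne's conductor formula for restriction of scalars to $\mathcal{N}_E$ and then taking norms. Your extra argument that an odd prime $p\mid y$ cannot be inert in $M$ is valid but unnecessary for the norm computation: since $\alpha\overline{\alpha}=\pm y_1\in\Z$, every prime of $M$ above such a $p$ divides $\alpha\overline{\alpha}$, and the total contribution to $N_{M/\Q}\bigl(\mathrm{Rad}_2(\alpha\overline{\alpha})\bigr)$ is $p^2$ regardless of whether $p$ splits or is inert, which is exactly why the paper asserts $\mathrm{Norm}(\mathrm{Rad}_2(\alpha\overline{\alpha}))=\mathrm{Rad}_2(y)^2$ without further comment.
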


\begin{proof} 
It remains to compute the conductor of $B$. This can be obtained from the conductor of $E$ using the formula in \citep[Proposition~1]{Milne}. Writing $\Delta_M$ for the discriminant of $M$, we have \[ N_B = (\Delta_{M})^2 \, \mathrm{Norm}(\mathcal{N}_E) = q^2 \cdot 2^2 q^2 \cdot \mathrm{Norm}(\mathrm{Rad}_2(\alpha \overline{\alpha})) = 2^2 q^4 (\mathrm{Rad}_2(y))^2, \] and the proposition follows.
\end{proof}

We can now use the modularity of $B$ and standard level-lowering results to deduce the following result. 

\begin{proposition}\label{repiso} Let $q \in \{17,41,89,97\}$ and let $n \geq 11$. Write $G_M = \mathrm{\Gal}(\overline{\Q}/M)$. Then we have \begin{equation}\label{repisoeq} \overline{\rho}_{E,n} \sim \left.{\overline{\rho}_{f,\mathfrak{n}}}\right|_{G_M}, \end{equation} for $f$ a newform of level $2q^2$ and character $\epsilon$, and $\mathfrak{n}$ a prime above $n$ in the coefficient field of $f$.
\end{proposition}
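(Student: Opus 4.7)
My plan is to deduce this proposition in three stages: modularity, translation to classical newforms, and level-lowering.

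First, I would invoke modularity of $B = \mathrm{Res}_\Q^M(E)$. By the preceding proposition, $B$ is a $\Q$-simple abelian surface of $\mathrm{GL}_2$-type with $\Q$-endomorphism algebra $\Q(\sqrt{-2})$. By Serre's modularity conjecture, now a theorem of Khare--Wintenberger and Kisin (combined with the Faltings--Serre style argument relating $\mathrm{GL}_2$-type abelian varieties to newforms, as developed by Ribet and Wiles), $B$ is modular: it is isogenous over $\Q$ to the abelian variety $A_g$ attached to a weight-$2$ newform $g$. The dictionary between $\mathrm{GL}_2$-type abelian varieties and newforms gives a level for $g$ equal to $N_B^{1/\dim B}$, a coefficient field $\Q(\sqrt{-2})$ (matching the $\Q$-endomorphism algebra), and a nebentypus equal to the splitting character $\epsilon$ of the associated $\Q$-curve, as computed in the previous section. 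Since $N_B = (2q^2 \mathrm{Rad}_2(y))^2$, the newform $g$ has level $2q^2 \mathrm{Rad}_2(y)$ and character $\epsilon$.

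Second, I would translate this into the required statement about mod $n$ Galois representations. The decomposition $B \sim_\Q A_\beta$ combined with the theory of Quer (used earlier to identify the splitting map $\beta$) shows that for any prime $\mathfrak{n}$ of the coefficient field of $g$ lying above $n$, the mod $\mathfrak{n}$ Galois representation $\overline{\rho}_{g,\mathfrak{n}}$, restricted to $G_M$, is isomorphic (up to twist by the splitting character absorbed into the identification) to the mod $n$ representation $\overline{\rho}_{E,n}$ coming from the $n$-torsion of $E/M$. This gives an isomorphism $\overline{\rho}_{E,n} \sim \left.\overline{\rho}_{g,\mathfrak{n}}\right|_{G_M}$ at the level $2q^2 \mathrm{Rad}_2(y)$.

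Third, I would apply Ribet's level-lowering over $\Q$ for newforms with nebentypus to strip the primes of $\mathrm{Rad}_2(y)$ that are coprime to $2q$. The irreducibility of $\overline{\rho}_{E,n}$ established in Proposition~\ref{irreduc} forces the irreducibility of $\overline{\rho}_{g,\mathfrak{n}}$ (an irreducible restriction cannot come from a reducible representation). For each rational prime $\ell \mid \mathrm{Rad}_2(y)$ with $\ell \ne 2$, and each prime $\pi$ of $M$ above $\ell$, Lemma~\ref{Conductor2}(2) says that $E$ has multiplicative reduction at $\pi$ with $n \mid \ord_\pi(\Delta)$, and one checks that the reduction type and $n$-divisibility of the valuation of $\Delta$ descend correctly to the rational prime $\ell$ (both primes above $\ell$ give multiplicative reduction with the right congruence, since $\ell \nmid \gamma\overline{\gamma}$ and the construction is Galois equivariant). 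These are precisely the local hypotheses allowing Ribet's level-lowering to remove the prime $\ell$ from the level of $g$, without altering the character $\epsilon$ (whose conductor $q$ is coprime to $\ell$). Iterating over all such $\ell$, we arrive at a newform $f$ of weight $2$, level $2q^2$, and character $\epsilon$, whose mod $\mathfrak{n}$ representation restricts on $G_M$ to $\overline{\rho}_{E,n}$.

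The main technical obstacle is the second step: carefully confirming that the mod $\mathfrak{n}$ representation of the newform $g$, once transferred through the isogeny $B \sim A_g$ and the projection $B \to E$ (inherent in the restriction of scalars), really does restrict on $G_M$ to $\overline{\rho}_{E,n}$ and not to some twist of it. This is exactly the delicate point in the $\Q$-curve framework and is handled via the splitting-character formalism of Quer; in particular, one must track that the nebentypus $\epsilon$ and the $\sqrt{-2}$-coefficients arising from $\beta$ match up so that the restriction to $G_M$ kills the character and recovers the genuine representation of $E$, rather than a twist. Once that bookkeeping is in place, the level-lowering step is a routine application of Ribet's theorem in the nebentypus setting.
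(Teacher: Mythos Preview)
Your proposal is correct and follows essentially the same route as the paper: modularity of $B$ via Ribet's theorem on $\mathrm{GL}_2$-type varieties (underpinned by Khare--Wintenberger), identification of the level $2q^2\,\mathrm{Rad}_2(y)$ and nebentypus $\epsilon$, irreducibility inherited from Proposition~\ref{irreduc}, and then standard level-lowering over $\Q$ down to level $2q^2$. The paper is terser than you are, and for the twist issue you flag as the main technical obstacle it simply invokes the fact that $E$ is \emph{completely defined over $M$} (the isogeny $\varphi_\sigma$ is $M$-rational), which is precisely what guarantees that restriction of $\overline{\rho}_{g,\mathfrak{n}}$ to $G_M$ recovers $\overline{\rho}_{E,n}$ on the nose rather than a twist.
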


\begin{proof} By \citep[Theorem~4.4]{Ribet}, $B$ is isogenous to a factor, $A_g$, of $J_1(N)$ for some $N$, where $A_g$ is the abelian variety attached to some newform $g$. We have that $N^{\dim(A_g)} = N_B = \left(2q^2 \,\mathrm{Rad}_2(y) \right)^2,$ and so $N = 2q^2 \,\mathrm{Rad}_2(y)$. Moreover, $g$ has character $\epsilon^{-1} = \epsilon$, since $\epsilon$ has order $2$.

By Proposition \ref{irreduc}, the representation $\overline{\rho}_{E,n}$ is irreducible, so the representation $\overline{\rho}_{g,\pi}$ is too, and applying standard level-lowering results, we have that $\overline{\rho}_{g,\mathfrak{\pi}} \sim \overline{\rho}_{f,\mathfrak{n}}$, for $f$ a newform of level $2q^2$ and character $\epsilon$, and $\pi, \mathfrak{n}$ primes above $n$. Since $\left.\beta\right|_{G_M}$ is trivial, using \citep[pp.~210--211]{vanlangen}, we have \[ \overline{\rho}_{E,n} \sim \left.{\overline{\rho}_{g,\mathfrak{\pi}}}\right|_{G_M}  \sim \left.{\overline{\rho}_{f,\mathfrak{n}}}\right|_{G_M},\] as required. 
\end{proof}

\section{Eliminating Newforms}

We start by using \texttt{Magma} to compute the Galois conjugacy classes of newforms (i.e. their $q$-expansions) at level $2q^2$ with character $\epsilon$. Table \ref{TabNew} records some of this data.
\begingroup 
\renewcommand*{\arraystretch}{1.8}
\begin{table}[ht!]
\begin{center} \footnotesize
\begin{tabular}{ |c|c|c|c|c|c| } 
 \hline
 $q$ & $\dim$ & no. classes & (size of class, multiplicity) & time \\
 \hline 
 $17$ & $22$ & $6$ & $(2,3), (4,1), (6,2)$ & $1$s \\ 
 \hline
 $41$ & $136$ & $18$ & $(2,4),(4,5),(6,2),(8,4),(16,1),(24,2)$ & $8$s\\ 
 \hline
 $89$ & $652$ & $26$ & \makecell{$(2,4),(4,2), (6,4), (8,3), (12,2), (24,3), (30,1), $ \\ $ (40,2),  (50,1), (60,1), (80,1), (96,2) $} & $400$s \\
 \hline
 $97$ & $774$ & $29$ & \makecell{$(2,4),(4,3), (6,3),(8,4),(12,3),(20,3),(24,1), $ \\ $ (32,3),(40,1), (48,1),(64,1),(168,2) $ }  & $739$s\\
 \hline
\end{tabular}
\vskip2ex
\caption{\label{TabNew}\normalsize Newform data. Here, \emph{dim} refers to the dimension of the space of newforms and \emph{time} refers to the computation time using a 2200MHz AMD Opterons.}
\end{center}
\end{table} 
\endgroup  
\normalsize

Let $\mathfrak{p} \nmid 2qn$ be a prime of $M$ above a rational prime $p$ and denote by $\mathrm{Frob}_\mathfrak{p} \in G_M$ a Frobenius element at $\mathfrak{p}$. Let $f$ denote the newform related to $E$ in Proposition \ref{repiso}. Then, taking traces in (\ref{repisoeq}), we have \begin{equation}\label{Traceeq} \mathrm{Tr}(\overline{\rho}_{E,n}(\mathrm{Frob}_\mathfrak{p})) =  \mathrm{Tr}(\overline{\rho}_{f,\mathfrak{n}}(\mathrm{Frob}_\mathfrak{p})).\end{equation}

We first consider the right-hand side of (\ref{Traceeq}). Writing $a_p(f)$ for the $p$-th coefficient of the $q$-expansion of $f$, we start by defining the quantity \[ t_{f, \mathfrak{p}} =  \begin{cases*} a_p(f) & if $p$ \text{splits in} $M$, \\ a_p(f)^2 + 2p  & if $p$ \text{is inert in} $M$. \end{cases*} \] By \citep[pp.~217--219]{vanlangen} for example, we have $\mathrm{Tr}(\overline{\rho}_{f,\mathfrak{n}}(\mathrm{Frob}_\mathfrak{p})) \equiv t_{f, \mathfrak{p}} \pmod{\mathfrak{n}}$, where we have used the fact that $\epsilon(p) = -1$ when $p$ is inert in $M$. We highlight the fact that $t_{f, \mathfrak{p}}$ is independent of $\mathfrak{n}$.

Next, for the left-hand side of (\ref{Traceeq}), the quantity  $\mathrm{Tr}(\overline{\rho}_{E,n}(\mathrm{Frob}_\mathfrak{p})) $ is dependent on our choice of $x$ and $m$ (i.e. dependent on our original solution to equation (\ref{maineq})). However, looking at how $E = E_{x,m}$ is defined in (\ref{E}), we see that the trace will only depend on $x$ and $q^{m} \pmod{p}$. In particular, it will only depend on the value of $x$ modulo $p$, and  $m$ modulo  $(p-1)$ (in fact it will only depend on $m$ modulo the multiplicative order of $q \pmod{p}$).
Given $0 \leq \chi \leq p-1$ and $0 \leq \mu \leq p-2$, write $E_{\chi,\mu}$ for the curve obtained by substituting $x = \chi$ and $m = \mu$ into $E_{x,m}$, defined in (\ref{E}). If $\mathfrak{p} \mid \Delta_{E_{\chi,\mu}}$ then, as in the proof of Lemma \ref{Conductor2} (2), we see that $\mathfrak{p} \nmid c_4(E_{\chi,\mu})$ (and also $\mathfrak{p} \nmid c_6(E_{\chi,\mu})$), so $E_{\chi,\mu}$ has multiplicative reduction at $\mathfrak{p}$. We then have
\begin{equation*} \mathrm{Tr}(\overline{\rho}_{E_{\chi,\mu},n}(\mathrm{Frob}_\mathfrak{p})) = \begin{cases} a_{\mathfrak{p}}(E_{\chi,\mu})  & \text{if }~ \mathfrak{p} \nmid \Delta_{E_{\chi,\mu}},  \\ \mathrm{Norm}(\mathfrak{p}) + 1 & \text{if }~ \mathfrak{p} \mid \Delta_{E_{\chi,\mu}} \text{ and } (-c_6/c_4 ~ \mathrm{mod}~\mathfrak{p}) \in (\mathbb{F}_\mathfrak{p}^*)^2,  \\ -\mathrm{Norm}(\mathfrak{p})-1 & \text{if }~ \mathfrak{p} \mid \Delta_{E_{\chi,\mu}} \text{ and } (-c_6/c_4 ~ \mathrm{mod}~\mathfrak{p}) \notin (\mathbb{F}_\mathfrak{p}^*)^2. \end{cases} 
\end{equation*}
We can now simply run through all possible pairs $\chi$ and $\mu$ in this range. Define \[ \mathcal{A}_\mathfrak{p} = \{ \mathrm{Tr}(\overline{\rho}_{E_{\chi,\mu},n}(\mathrm{Frob}_\mathfrak{p})) : 0 \leq \chi \leq p-1, \quad 0 \leq \mu \leq p-2 \}. \] Then we know that $\mathrm{Tr}(\overline{\rho}_{E_{x,m},n}(\mathrm{Frob}_\mathfrak{p})) \in \mathcal{A}_\mathfrak{p}$, and we can compute the set $\mathcal{A}_\mathfrak{p}$ for any $\mathfrak{p} \nmid 2qn$.

Define \[ \mathcal{B}_{f,\mathfrak{p}}  = p \cdot \mathrm{Norm} \big(\prod_{ a \in \mathcal{A}_\mathfrak{p}} (a - t_{f,\mathfrak{p}}) \big). \] Then by (\ref{Traceeq}) we have that $n \mid \mathcal{B}_{f,\mathfrak{p}}$ whenever $\mathfrak{p} \nmid 2q$. Note that we have included a factor of $p$ in the definition of $\mathcal{B}_{f,\mathfrak{p}}$, as we would usually require $\mathfrak{p} \nmid 2qn$, but $n$ is unknown. Then if $\mathcal{B}_{f,\mathfrak{p}}$ is non-zero, we obtain a bound on $n$. Moreover, we can repeat this with many auxiliary primes $\mathfrak{p}$. If $\mathfrak{p}_1, \dots, \mathfrak{p}_r$ are primes not dividing $2q$, then \[ n \mid \mathcal{B}_f = \mathcal{B}_{f,\mathfrak{p}_1, \dots, \mathfrak{p}_r} = \gcd \left(\mathcal{B}_{f,\mathfrak{p}_1}, \dots, 
\mathcal{B}_{f,\mathfrak{p}_r} \right). \]

\begin{proof}[Proof of Theorem \ref{mainthm}]
Let $q = 41$ or $97$. We computed the value $\mathcal{B}_f$, and in particular its prime factors, for each newform $f$ at level $2q^2$ and character $\epsilon$. For most newforms $f$, we did this by choosing a prime of $M$ above each rational prime between $3$ and $30$. For computational reasons, when $q = 97$ and $f$ is one of the two newforms with coefficient field of degree $168$, we only worked with a prime above each of $3$ and $11$. We found that for each newform $f$, all prime factors of $\mathcal{B}_f$ were $<300$, except for two newforms when $q= 41$, which we denote $g_1$ and $g_2$. Since we can take $n > 1000$ by Lemma \ref{>1000}, this eliminates all newforms except for $g_1$ and $g_2$. We are unable to eliminate these two newforms as their $\mathcal{B}$ values are $0$, and this remains the case when using more auxiliary primes.

Since we managed to eliminate all newforms when $q=97$, this proves Theorem \ref{mainthm} in the case $q = 97$. For $q = 41$, we are able to eliminate the remaining forms using a multi-Frey approach. 

Let $q = 41$. Recall that $k = 2m$ or $2m+1$ according to whether $k$ is even or odd, respectively. From Table \ref{TabRat}, we know that $\overline{\rho}_{G_{x,k,q},n} \sim \overline{\rho}_{F,n}$ where $F$ is the elliptic curve with Cremona label `82a1'. Let $p = 7$ which is inert in $M$, and write $\mathfrak{p} = p \cdot \mathcal{O}_M$ for the unique prime of $M$ above $7$. 

We compute $a_7(F) = -4$. Given $0 \leq \chi \leq 6$ and $0 \leq \kappa \leq 5$, write $G_{\chi,\kappa}$ for the curve obtained by substituting $x = \chi$ and $k = \kappa$ into the definition of $G_{x,k,q}$ in (\ref{RatFrey}). Then we  compute $\mathrm{Tr}(\overline{\rho}_{G_{\chi,\kappa},n}(\mathrm{Frob}_7))$ for each $\chi$ and $\kappa$. We found this trace to be independent of $\kappa$. The traces are recorded in Table \ref{Tab41} and we see that this forces $x \equiv 6 \pmod{7}$.

\begingroup 
\renewcommand*{\arraystretch}{1.8}
\begin{table}[ht!]
\begin{center} \small
\begin{tabular}{ |c|c|c|c|c|c|c|c| } 
 \hline
 $\chi$ & $0$ & $1$ & $2$ & $3$ & $4$ & $5$ & $6$ \\
 \hline
 $\mathrm{Tr}(\overline{\rho}_{G_{\chi,\mu},n}(\mathrm{Frob}_7))$ & $0$ & $4$ & $2$ & $2$ & $ -2$ & $ -2$ & $-4$ \\
 \hline
\end{tabular}
\vskip2ex
\caption{\label{Tab41}\normalsize Proof of Theorem \ref{mainthm}: Traces of Frobenius at $7$ for $G$.}
\end{center}
\end{table} 
\endgroup  
\normalsize

When $\chi = 6$, we find that $\mathrm{Tr}(\overline{\rho}_{E_{6,\mu},n}(\mathrm{Frob}_\mathfrak{p})) = 6$ for each $0 \leq \mu \leq 5$ and $k$ even or odd. However,  \begin{align*} \mathrm{Tr}(\overline{\rho}_{g_1,\mathfrak{n}_1}(\mathrm{Frob}_\mathfrak{p}))  & = a_p(g_1)^2 + 2p = -4,  ~~\text{and} \\ \mathrm{Tr}(\overline{\rho}_{g_2,\mathfrak{n}_2}(\mathrm{Frob}_\mathfrak{p})) & = a_p(g_2)^2 + 2p = 14. \end{align*} It follows that $n \mid 7 \cdot 10$ or $ n \mid 7 \cdot 12$. So $x \not\equiv 6 \pmod{7}$, a contradiction. This completes the proof of the theorem.
\end{proof}

When $q = 17$ or $q = 89$, we found that, in each case,  there was a single obstructing newform that we were unable to eliminate. When $q = 17$, this is due to the solution $(-23)^2 - 17 = 2^9$, and when $q = 89$, this is a consequence of the identity $(-91)^2 - 89 = 2^{13}$. The exponent $n$ in each case exceeds $8$, and it follows that the curves $E_{-23,0,17}$ and $E_{-91,0,89}$ have multiplicative reduction at the primes of $M$ above $2$. This can be verified directly or can be seen from the proof of Lemma \ref{Conductor1}. We can check that the traces of Frobenius for these two curves match the traces of the obstructing newforms (for all primes of characteristic $<1000$ say). We also note that in both cases, the coefficient field of the obstructing newform is $\Q(\sqrt{-2})$. This is the same as the $\Q$-endomorphism algebra of $B = \mathrm{Res}_\Q^M(E)$, as expected.

When $q = 41$ or $q = 97$, the solutions in Theorem \ref{mainthm} with exponent $n = 7$ prevent us from eliminating the isomorphism of mod $n$ representations of $G$ and $F_q$ (as noted in Section 2), but these solutions do not pose any issue when working with the $\Q$-curve $E$. This is because the exponent $n=7$ is not large enough to force multiplicative reduction at the primes of $M$ above the rational prime $2$. A similar remark applies for (restricting attention to primes $q < 1000$)
$$
q \in \{ 233, 313, 401, 601 \},
$$
which are potentially accessible to the methods of this paper (though the corresponding computation of forms at level $2q^2$ would, with current techniques,  be formidable).


\bibliographystyle{plainnat}

\begin{thebibliography}{26}
\providecommand{\natexlab}[1]{#1}
\providecommand{\url}[1]{\texttt{#1}}
\expandafter\ifx\csname urlstyle\endcsname\relax
  \providecommand{\doi}[1]{doi: #1}\else
  \providecommand{\doi}{doi: \begingroup \urlstyle{rm}\Url}\fi


\bibitem{Barr}
C.~Barros.
\newblock On the Lebesgue--Nagell equation and related subjects.
\newblock University of Warwick PhD thesis, August 2010.

\bibitem{BCDY}
M.~Bennett, I.~Chen, S.~Dahmen, and S.~Yazdani.
\newblock On the equation $a^3 + b^{3n} = c^2$.
\newblock \emph{Acta Arith.}, 163\penalty0 (4):\penalty0 327--343, 2014. 

\bibitem{BeSi2}
M.~Bennett and S.~Siksek.
\newblock Differences between perfect powers: the Lebesgue--Nagell equation.
\newblock \emph{arXiv preprint}, arXiv:2109.09128v1, 2021, to appear in \emph{Trans. Amer. Math. Soc}.

\bibitem{BeSi}
M.~Bennett and S.~Siksek.
\newblock Differences between perfect powers: prime power gaps.
\newblock \emph{arXiv preprint}, arXiv:2110.05553v1, 2021.


\bibitem{magma}
W.~Bosma, J.~Cannon, and C.~Playoust.
\newblock The {M}agma algebra system. {I}. {T}he user language.
\newblock \emph{J. Symbolic Comput.}, 24\penalty0 (3-4):\penalty0 235--265, 1997.

\bibitem{Bu-Acta}
Y.~Bugeaud.
\newblock On the {D}iophantine equation {$x^2-p^m=\pm y^n$}.
\newblock {\em Acta Arith.}, 80(3):213--223, 1997.


\bibitem{Ellenberg}
J.~Ellenberg.
\newblock Galois representations attached to $\mathbb{Q}$-curves and the generalized Fermat equation $A^4+B^2 = C^p$.
\newblock \emph{Amer. J. Math.}, 126\penalty0 (4):\penalty0 763--787, 2004.

\bibitem{asym}
N.~Freitas and S.~Siksek.
\newblock The asymptotic Fermat’s last theorem for five-sixths of real quadratic fields.
\newblock \emph{Compos. Math.}, 151\penalty0 (8):\penalty0 1395--1415, 2015.

\bibitem{small_real}
N.~Freitas and S.~Siksek.
\newblock Fermat's last theorem over some small real quadratic fields.
\newblock \emph{Algebra Number Theory}, 9\penalty0 (4):\penalty0 875--895, 2015.

\bibitem{ThueMahler}
A.~Gherga and S.~Siksek.
\newblock Efficient resolution of Thue--Mahler equations.
\newblock \emph{arXiv preprint}, 	arXiv:2207.14492v1, 2022. 


\bibitem{Ivorra}
W.~Ivorra.
\newblock Sur les \'equations $x^p + 2^\beta y^p = z^2$ et $x^p + 2^\beta y^p = 2z^2$.
\newblock \emph{Acta Arith.}, 108\penalty0 (4):\penalty0 326--338, 2003.

\bibitem{vanlangen}
J.~van~Langen.
\newblock On the sum of fourth powers in arithmetic progressions.
\newblock \emph{Int. J. Number Theory}, 17\penalty0 (1):\penalty0 191--221, 2021.

\bibitem{MJ}
P.~Michaud-Jacobs.
\newblock Fermat's Last Theorem and modular curves over real quadratic fields. 
\newblock \emph{Acta Arith.}, 203\penalty0 (4):\penalty0 319--352, 2022.

\bibitem{Milne}
J.~Milne. 
\newblock On the arithmetic of abelian varieties.
\newblock \emph{Invent. Math.}, 17: 177--190, 1972.

\bibitem{Quer}
J.~Quer.
\newblock $\mathbb{Q}$-Curves and abelian varieties of $\mathrm{GL}_2$-type. 
\newblock \emph{Proc. Lond. Math. Soc.}, 81\penalty0 (2):\penalty0 285--317, 2000.

\bibitem{Ribet}
K.~Ribet.
\newblock Abelian varieties over $\Q$ and modular forms.
\newblock \emph{Algebra and Topology 1992}, Korea Adv. Inst. Sci. Tech., 53--79, 1992.

\bibitem{Silverman}
J.~Silverman.
\newblock \emph{Advanced Topics in the Arithmetic of Elliptic Curves}, vol.~151 of \emph{Graduate Texts in Mathematics}.
\newblock Springer, New York, NY, 1994.

\end{thebibliography}

\end{document}